\newtheorem{lemma}{Lemma}
\newtheorem{theorem}{Theorem}
\newtheorem*{corollary}{Corollary}
\newtheorem*{remark}{Remark}
\numberwithin{equation}{section}
\title{PhD}
\author{}
\date{January 2021}
\begin{document}

%\tableofcontents

\begin{center}
    %\textcolor{purple}{\textbf{TITLE}}
    \textbf{Moment Inequalities for Suprema of Gaussian Random Processes}
\end{center}

\begin{center}
     Simona Diaconu\footnote{Courant Institute, New York University, simona.diaconu@nyu.edu}
\end{center}

\begin{abstract}
    Suppose \((X_t)_{t \in T}\) is a Gaussian process indexed by some arbitrary set \(T:\) the random variable \(\sup_{t \in T}{X_t}\) can be very intricate and bounding its expectation is a natural step towards understanding it. Sudakov-Fernique inequality allows to order expectations of suprema of such random processes: if \((X_t)_{t \in T},(Y_t)_{t \in T}\) are centered Gaussian random processes satisfying \(\mathbb{E}[(X_t-X_s)^2] \leq \mathbb{E}[(Y_t-Y_s)^2]\) for all \(t,s \in T,\) then \(\mathbb{E}[\sup_{t \in T}{X_t}] \leq \mathbb{E}[\sup_{t \in T}{Y_t}].\) This work obtains similar results for higher moments under a slightly stronger condition than the one aforementioned.
    %the maximum of \(X\) is at most that of \(Y\) in expectation, i.e.,
\end{abstract}

%\tableofcontents

\section{Introduction}

A special case of suprema of random processes (i.e., collections of random variables that are not necessarily independent) for which there exist elegant and powerful results is the Gaussian family: these are normal random variables \((X_t)_{t \in T},\) whose indices lie in some set \(T,\) and the mostly analyzed statistic for such collections of random variables is
\[\sup_{t \in T}{X_t}.\]
Slepian's inequality and Sudakov-Fernique's inequality are classical results that relate suprema of two alike processes: suppose \((X_t)_{t \in T},(Y_t)_{t \in T}\) are centered Gaussian processes. The aforementioned inequalities, stated as theorems \(7.2.1\) and \(7.2.11\) in Vershynin~\cite{vershynin}, are as follows.
\vspace{0.3cm}
\par
If \(\mathbb{E}[X_t^2]=\mathbb{E}[Y_t^2],\mathbb{E}[(X_t-X_s)^2] \leq \mathbb{E}[(Y_t-Y_s)^2]\) for all \(t,s \in T,\) then for all \(\tau \geq 0,\)
\begin{equation}\label{slepian}\tag{Sle}
    \mathbb{P}(\sup_{t \in T}{X_t} \geq \tau) \leq \mathbb{P}(\sup_{t \in T}{Y_t} \geq \tau).
\end{equation}
%\vspace{0.3cm}
\par
If \(\mathbb{E}[(X_t-X_s)^2] \leq \mathbb{E}[(Y_t-Y_s)^2]\) for all \(t,s \in T,\) then
\begin{equation}\label{sudakovferniq}\tag{Sud-Fer}
    \mathbb{E}[\sup_{t \in T}{X_t}] \leq \mathbb{E}[\sup_{t \in T}{Y_t}].
\end{equation}
%\vspace{0.3cm}
Similarly, Gordon's inequality (exercise \(7.2.14\) in Vershynin~\cite{vershynin}) deals with suprema over an index set that is a cross product: %\(U \times T:\)
\vspace{0.3cm}
\par
If \(\mathbb{E}[X_{ut}^2]=\mathbb{E}[Y_{ut}^2],\mathbb{E}[(X_{ut}-X_{us})^2] \leq \mathbb{E}[(Y_{ut}-Y_{us})^2], \mathbb{E}[(X_{ut}-X_{vs})^2] \leq \mathbb{E}[(Y_{ut}-Y_{vs})^2]\) for \(t,s \in T, \newline u \ne v, u,v \in U,\) then for all \(\tau \geq 0,\)
\begin{equation}\label{gordon}\tag{Gor}
    \mathbb{P}(\inf_{u \in U}{\sup_{t \in T}}{X_{ut}} \geq \tau) \leq \mathbb{P}(\inf_{u \in U}{\sup_{t \in T}}{Y_{ut}} \geq \tau).
\end{equation}
%\vspace{0.3cm}
These inequalities are oftentimes used as upper bounds (i.e., \(X\) is determined by a question of interest, whereas \(Y\) is designed for \(X\)), and converse results providing lower bounds exist as well: for instance, Sudakov's minoration inequality states for a centered Gaussian process \((X_t)_{t \in T}\) and \(\epsilon>0,\)
\[\mathbb{E}[\sup_{t \in T}{X_t}] \geq c\epsilon \sqrt{\mathcal{N}(T,d,\epsilon)},\]
where %the covering number 
\(\mathcal{N}(T,d,\epsilon)\) is the minimal size of an \(\epsilon\)-net of the metric space \((T,d)\) for \(d(t,s)=(\mathbb{E}[(X_t-X_s)^2])^{1/2},\) and \(c>0\) is an absolute constant. It must be mentioned that a first step in the proofs of all the aforesaid results is assuming \(|T|<\infty:\) this is connected to \(\sup_{t \in T}{X_t}\) not being a priori a random variable (i.e., it might fail to be a measurable function), which is dealt with by the following definition
\[\mathbb{E}[\sup_{t \in T}{X_t}]=\sup_{S \subset T, |S|<\infty}{\mathbb{E}[\max_{s \in S}{X_s}]}\]
(see also chapter \(10\) in Tropp~\cite{troppec}). This justifies considering solely the case \(|T|<\infty\) in what follows: furthermore, since an analogous definition can be adopted for functions of the suprema, the main result in this work, Theorem~\ref{extsudfer}, can be reformulated using moments of suprema. 
\par
The above inequalities %for random processes
rely on a key property of standard normal random variables,
\[\mathbb{E}[f'(X)]=\mathbb{E}[Xf(X)]\]
for \(X\overset{d}{=} N(0,1),\) and any differentiable function \(f:\mathbb{R} \to \mathbb{R}\) for which both expectations are finite: this identity can be employed to prove a version of Gaussian interpolation (lemma \(7.2.7\) in Vershynin~\cite{vershynin}).

\begin{lemma}[Gaussian Interpolation, Vershynin~\cite{vershynin}]\label{interpollemma}
Suppose \(X,Y \in \mathbb{R}^d\) are independent random vectors\footnote{By a slight abuse of notation, any zero vector is denoted by \(0.\)} with \(X\overset{d}{=}N(0,\Sigma^X),Y\overset{d}{=}N(0,\Sigma^Y),\) and let
\[Z(u)=\sqrt{u} \cdot X+\sqrt{1-u} \cdot Y, \hspace{0.5cm} u \in [0,1].\]
Then for any twice differentiable function \(f:\mathbb{R}^d \to \mathbb{R},\)
\begin{equation}\label{interpol}\tag{GI}
    \frac{d}{du}\mathbb{E}[f(Z(u))]=\frac{1}{2}\sum_{1 \leq i,j \leq d}{(\Sigma^X_{ij}-\Sigma^Y_{ij})\mathbb{E}[\frac{\partial^2f}{\partial{x_i} \partial{x_j}}(Z(u))]},
\end{equation}
assuming all expectations involved are finite.
\end{lemma}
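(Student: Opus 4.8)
The plan is to differentiate inside the expectation, apply the chain rule in $Z(u)$, and then use Gaussian integration by parts to trade the first-order derivatives of $f$ (paired with the components of $X$ and $Y$) for second-order derivatives weighted by the covariance entries. First I would justify passing $\frac{d}{du}$ through the expectation: since all relevant expectations are assumed finite, a dominated-convergence argument applied to the difference quotients yields $\frac{d}{du}\mathbb{E}[f(Z(u))]=\mathbb{E}\big[\frac{d}{du}f(Z(u))\big]$ on the open interval $(0,1)$. Computing $\frac{d}{du}Z(u)=\frac{1}{2\sqrt{u}}X-\frac{1}{2\sqrt{1-u}}Y$ and invoking the chain rule then gives
\[\frac{d}{du}\mathbb{E}[f(Z(u))]=\sum_{i=1}^d \mathbb{E}\Big[\frac{\partial f}{\partial x_i}(Z(u))\Big(\frac{X_i}{2\sqrt{u}}-\frac{Y_i}{2\sqrt{1-u}}\Big)\Big].\]

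Next I would treat the $X$-contribution and the $Y$-contribution separately, exploiting the independence of $X$ and $Y$. For the $X$-term I condition on $Y$, so that $X_i\mapsto \frac{\partial f}{\partial x_i}(\sqrt{u}X+\sqrt{1-u}Y)$ becomes a function of the single Gaussian vector $X\overset{d}{=}N(0,\Sigma^X)$, and the multivariate form of the stated identity $\mathbb{E}[g'(X)]=\mathbb{E}[Xg(X)]$, namely $\mathbb{E}[X_i h(X)]=\sum_j \Sigma^X_{ij}\mathbb{E}[\partial_j h(X)]$, applies. Since differentiating $\frac{\partial f}{\partial x_i}(Z(u))$ with respect to the $j$-th coordinate of $X$ produces a factor $\sqrt{u}$, this gives
\[\sum_{i=1}^d \mathbb{E}\Big[\frac{X_i}{2\sqrt{u}}\frac{\partial f}{\partial x_i}(Z(u))\Big]=\frac{1}{2}\sum_{1\le i,j\le d}\Sigma^X_{ij}\mathbb{E}\Big[\frac{\partial^2 f}{\partial x_i \partial x_j}(Z(u))\Big].\]
The crucial point is that the $\frac{1}{\sqrt{u}}$ prefactor cancels exactly against the $\sqrt{u}$ generated by the chain rule. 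The mirror-image computation for the $Y$-term (conditioning on $X$) produces $-\frac{1}{2}\sum_{i,j}\Sigma^Y_{ij}\mathbb{E}[\partial_i\partial_j f(Z(u))]$, and summing the two contributions yields the claimed formula.

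I expect the main obstacle to be analytic rather than algebraic: rigorously justifying both the interchange of differentiation and expectation and the Gaussian integration-by-parts step when $f$ is merely twice differentiable, with no growth control on $f$ or its derivatives beyond the blanket finiteness hypothesis. The cleanest route is to first establish the identity for smooth, rapidly decaying $f$, where every manipulation is literal, and then to remove the restriction by a truncation/approximation argument, using the finiteness of the expectations to pass to the limit. A milder secondary point is the apparent singularity of $\frac{d}{du}Z(u)$ at the endpoints $u\in\{0,1\}$; this is harmless, since the singular prefactors cancel, so the identity is proved on $(0,1)$ and extended to the closed interval by continuity of both sides in $u$.
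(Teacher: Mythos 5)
Your proposal is correct, but note that the paper itself offers no proof of this lemma: it is quoted directly from Vershynin~\cite{vershynin} (Lemma 7.2.7 there), and the argument given in that reference is precisely the one you outline --- differentiate under the expectation, apply the chain rule to get the $\frac{X_i}{2\sqrt{u}}-\frac{Y_i}{2\sqrt{1-u}}$ weights, and then use the multivariate Gaussian integration-by-parts identity $\mathbb{E}[X_i h(X)]=\sum_j \Sigma^X_{ij}\mathbb{E}[\partial_j h(X)]$ (conditioning on the other vector) so that the $\sqrt{u}$ and $\sqrt{1-u}$ factors cancel. So your attempt matches the standard proof the paper relies on, including the usual handling of the analytic technicalities by first treating well-behaved $f$ and passing to the limit.
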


The aforesaid results ((\ref{slepian}),(\ref{sudakovferniq}),(\ref{gordon})) are usually derived by defining functions \(f:\mathbb{R}^d \to \mathbb{R}\) that approximate the function of interest (such as \(\max_{1 \leq i \leq d}{x_i},\chi_{\max_{1 \leq i \leq d}{x_i} <\tau}\)), and whose second-order partial derivatives share signs with the corresponding entries of the covariance matrix of the vectors involved: the latter property alongside (\ref{interpol}) ensures \(u \to \mathbb{E}[f(Z(u))]\) is nondecreasing, whereby 
\[\mathbb{E}[f(Y)]=\mathbb{E}[f(Z(0))] \leq \mathbb{E}[f(Z(1))]=\mathbb{E}[f(X)],\] 
while the former propagates this inequality to the desired statistic via the dominated convergence theorem. (It must be mentioned that this method of guaranteeing the derivative does not change sign is attributed to Kahane~\cite{kahane} in the course notes by Tropp~\cite{troppec}: see chapter \(10\) therein.)
\par
A simple observation allows a wider use of (\ref{interpol}): the condition that the right-hand side term is always nonnegative can be relaxed via a limiting procedure. This paper builds on this and derives moment inequalities for moments of suprema of Gaussian random processes, similar in spirit to (\ref{sudakovferniq}).

\begin{theorem}\label{extsudfer}
Suppose \(X\overset{d}{=}N(0,\Sigma^X),Y\overset{d}{=}N(0,\Sigma^Y)\) for \(\Sigma^X,\Sigma^Y \in \mathbb{R}^{k \times k}\) that satisfy
\begin{equation}\label{strcondd}
    2|\Sigma^X_{ij}-\Sigma^Y_{ij}| \leq \Sigma^Y_{ii}-\Sigma^X_{ii}+\Sigma^Y_{jj}-\Sigma^X_{jj}, \hspace{1cm} (1 \leq i,j \leq k).
\end{equation}
Then for all \(m \geq 1,\)
\[\mathbb{E}[(\max_{1 \leq i \leq k}{|X_i|})^m] \leq \mathbb{E}[(\max_{1 \leq i \leq k}{|Y_i|})^m] .\]
\end{theorem}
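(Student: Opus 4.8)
The plan is to approximate $(\max_i|x_i|)^m$ by a smooth function, apply Gaussian interpolation (\ref{interpol}), and use (\ref{strcondd}) to control the resulting Hessian contraction. Write $D=\Sigma^Y-\Sigma^X$, so that (\ref{strcondd}) reads $2|D_{ij}|\le D_{ii}+D_{jj}$; taking $i=j$ forces $D_{ii}\ge0$. Since $\max_i|x_i|=\max_i\max(x_i,-x_i)$, I would smooth it by the log-sum-exp (soft-max) over the $2k$ values $\{x_i,-x_i\}$,
\[F_\beta(x)=\frac1\beta\log\Big(\sum_{i=1}^k(e^{\beta x_i}+e^{-\beta x_i})\Big),\qquad F_\beta(x)\xrightarrow{\beta\to\infty}\max_i|x_i|=:h(x),\]
and compose it with a regularized power $G_\delta(t)=(t+\delta)^m-\delta^m$, which is nonnegative, nondecreasing and convex on $[0,\infty)$ with bounded second derivative for each $\delta>0$. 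The target function is then $f_{\beta,\delta}=G_\delta\circ F_\beta$, to which (\ref{interpol}) applies.

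For the key computation, writing the soft-max weights $p_i^{\pm}=e^{\pm\beta x_i}/S$ with $S=\sum_i(e^{\beta x_i}+e^{-\beta x_i})$, and setting $q_i=p_i^+-p_i^-=\partial_iF_\beta$, $r_i=p_i^++p_i^-$, one finds $\partial_{ij}F_\beta=\beta(\delta_{ij}r_i-q_iq_j)$, hence
\[\sum_{i,j}D_{ij}\,\partial_{ij}f_{\beta,\delta}=G_\delta''(F_\beta)\,q^\top Dq+\beta\,G_\delta'(F_\beta)\Big(\sum_iD_{ii}r_i-q^\top Dq\Big).\]
The crux is an elementary lemma: because $|q_i|\le r_i$, $r_i\ge0$ and $\sum_i r_i=1$, condition (\ref{strcondd}) gives
\[q^\top Dq\le\sum_{i,j}\tfrac{D_{ii}+D_{jj}}2|q_i||q_j|=\Big(\sum_j|q_j|\Big)\Big(\sum_iD_{ii}|q_i|\Big)\le\sum_iD_{ii}r_i,\]
so the second bracket above is nonnegative. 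As $G_\delta'\ge0$, that entire $\beta$-term is $\ge0$, and, via (\ref{interpol}) integrated over $u\in[0,1]$ (recall $Z(0)=Y$, $Z(1)=X$), it can only help:
\[\mathbb{E}[f_{\beta,\delta}(X)]-\mathbb{E}[f_{\beta,\delta}(Y)]=-\tfrac12\int_0^1\mathbb{E}\Big[\sum_{i,j}D_{ij}\partial_{ij}f_{\beta,\delta}(Z(u))\Big]du\le-\tfrac12\int_0^1\mathbb{E}\big[G_\delta''(F_\beta)\,q^\top Dq\big]\,du.\]

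The remaining term $G_\delta''(F_\beta)\,q^\top Dq$ carries the sign of $q^\top Dq$, which need not be nonnegative for finite $\beta$; this is exactly where the limiting relaxation flagged in the introduction enters. As $\beta\to\infty$, almost surely the coordinate $i^\ast$ maximizing $|Z_i(u)|$ is unique, the soft-max mass concentrates there, and $q\to\operatorname{sign}(Z_{i^\ast}(u))\,e_{i^\ast}$, whence $q^\top Dq\to D_{i^\ast i^\ast}\ge0$. Since $G_\delta''$ is bounded and $F_\beta\le h+\beta^{-1}\log(2k)$ supplies an integrable envelope, dominated convergence yields $-\tfrac12\int_0^1\mathbb{E}[G_\delta''(h)D_{i^\ast i^\ast}]\,du\le0$ on the right and $\mathbb{E}[G_\delta(h(X))]-\mathbb{E}[G_\delta(h(Y))]$ on the left; finally letting $\delta\downarrow0$ (monotone convergence, as $G_\delta(t)\downarrow t^m$) gives $\mathbb{E}[h(X)^m]\le\mathbb{E}[h(Y)^m]$. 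I expect the main obstacle to be this last passage to the limit: for $1\le m<2$ the factor $G_\delta''$ blows up near the origin as $\delta\to0$, so the dominating function must be organized carefully — the regularization $G_\delta$ is introduced precisely to keep the dominated-convergence step uniform in $\beta$ before $\delta$ is removed — and one must justify the a.s.\ uniqueness of the argmax together with the concentration $q^\top Dq\to D_{i^\ast i^\ast}$ that turns the otherwise wrong-signed term into a nonnegative one.
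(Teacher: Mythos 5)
Your proposal is correct in its core mechanism but follows a genuinely different route from the paper. The paper approximates \((\max_i|x_i|)^m\) by the \(\ell^p\)-type functions \(f_p(x)=(x_1^p+\cdots+x_k^p+e^{-p^2})^{m/p}\) with even \(p\to\infty\), applies interpolation plus the mean-value theorem, and splits the Hessian contraction into three pieces: one is pointwise nonpositive by (\ref{strcondd}), and the other two are shown to have expectation \(O(1/p)\) via quantitative two-dimensional Gaussian integral estimates (Lemma~\ref{lemmaint2}) combined with control of variances and correlations along the interpolation path (Lemma~\ref{lemmadelta}). You instead smooth with a symmetric log-sum-exp composed with a regularized power \(G_\delta\), and your algebra is right: \(\partial_{ij}F_\beta=\beta(\delta_{ij}r_i-q_iq_j)\), the contraction splits as \(G_\delta''(F_\beta)\,q^\top Dq+\beta G_\delta'(F_\beta)(\sum_iD_{ii}r_i-q^\top Dq)\), and your elementary lemma \(q^\top Dq\le(\sum_j|q_j|)(\sum_iD_{ii}|q_i|)\le\sum_iD_{ii}r_i\) (using \(|q_i|\le r_i\), \(\sum_i r_i=1\), \(D_{ii}\ge0\)) makes the entire \(\beta\)-term pointwise nonnegative. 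This is a real structural simplification: the ``bad'' term is killed exactly rather than estimated, so your argument needs only a soft limit \(\beta\to\infty\) (with \(\delta\) fixed, so \(G_\delta''\) is bounded for \(1\le m<2\) and dominated by a polynomial envelope for \(m\ge2\)) and entirely avoids the paper's integral estimates. The two-parameter ordering (\(\beta\to\infty\) first, then \(\delta\downarrow0\)) also correctly sidesteps the blow-up of \(G_\delta''\) you worry about at the end.

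The one genuine gap is the step you flag but do not carry out: the almost-sure uniqueness of the argmax of \(|Z_i(u)|\). This is simply false without a nondegeneracy hypothesis — if, say, \(X_1\equiv X_2\) and \(Y_1\equiv Y_2\), then \(Z_1(u)\equiv Z_2(u)\) and ties occur with probability one, so \(q\) does not concentrate on a single coordinate and your claimed limit \(q^\top Dq\to D_{i^\ast i^\ast}\) does not hold as stated. The fix is exactly the paper's first reduction: replace \(X,Y\) by \(X+\epsilon\xi\), \(Y+\epsilon\xi\) with the \emph{same} \(\xi\overset{d}{=}N(0,I)\) independent of \(X,Y\). This leaves \(D=\Sigma^Y-\Sigma^X\) unchanged (so (\ref{strcondd}) persists verbatim), forces all variances positive and all pairwise correlations strictly inside \((-1,1)\), and then Lemma~\ref{lemmadelta} shows these properties propagate to \(Z(u)\) for every \(u\in[0,1]\); consequently \(\mathbb{P}(|Z_i(u)|=|Z_j(u)|)=0\) for \(i\ne j\), the argmax is a.s.\ unique, and your limit argument goes through; finally let \(\epsilon\downarrow0\) by dominated convergence. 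With that insertion (roughly the half page the paper spends on condition (\ref{str})), your proof is complete.
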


Before proving this result, a few remarks on the proof of the Sudakov-Fernique inequality are in order. It relies on the following simple observation, 
\[\lim_{\beta \to \infty}{\frac{1}{\beta}\log{(\sum_{1 \leq i \leq k}{e^{\beta a_i}})}}=\max_{1 \leq i \leq k}{a_i} \hspace{0.5cm} (a_1,a_2,\hspace{0.05cm}...\hspace{0.05cm},a_k \in \mathbb{R}),\] 
as well as on a special property\footnote{\(\log\) denotes the natural logarithm, i.e., \(\log{e}=1.\)} of \(f(x_1,x_2,\hspace{0.05cm}...\hspace{0.05cm},x_k)=\log{(\sum_{1 \leq i \leq k}{e^{x_i}})},\) namely,
\begin{equation}\label{summ}
    \sum_{1 \leq i \leq k}{\frac{\partial{f}}{\partial{x_i}}}=1
\end{equation}
(see Vershynin~\cite{vershynin} for details): the latter property together with the covariance condition 
\[\mathbb{E}[(X_i-X_j)^2] \leq \mathbb{E}[(Y_i-Y_j)^2] \hspace{0.5cm} (1 \leq i,j \leq k)\]
guarantees the derivative in (\ref{interpol}) is a sum of terms that are at most \(0.\) The crucial identity (\ref{summ}) fails to hold for \(f(x_1,x_2,\hspace{0.05cm}...\hspace{0.05cm},x_k)=\log{(\sum_{1 \leq i \leq n}{e^{|x_i|^m}})}\) when \(m \ne 1\) (a function that is not even everywhere differentiable), and this obstructs generalizing (\ref{sudakovferniq}) to higher moments of the suprema. This work relies on another family of functions to bypass this difficulty and obtain comparison results between higher moments of maxima: the starting point is observing that
\[\lim_{n\to \infty}{(\sum_{1 \leq i \leq k}{|a_i|^{n}})^{m/n}}=(\max_{1 \leq i \leq k}{|a_i|})^m \hspace{0.5cm} (m>0)\]
\par
The rest of the paper is organized as follows. Section~\ref{sect1} introduces a corollary of Theorem~\ref{extsudfer}, subsection~\ref{sect01} consists of the proof of Theorem~\ref{extsudfer}, and subsection~\ref{sect02} collects two lemmas on correlations and expectations involving Gaussian random vectors.
%\begin{itemize}
%    \item \ref{sect01} consists of the proof of Theorem~\ref{extsudfer},
    %\item \ref{sect03} presents a consequence of the theorem for matrices with Gaussian entries,
%    \item \ref{sect02} collects two lemmas on correlations and expectations involving Gaussian random vectors.%correlations and expectations of
%\end{itemize}

\section{Monotonicity via Gaussian Intrapolation}\label{sect1}%revisited

Under the notation in Theorem~\ref{extsudfer}, the variance condition for Sudakov-Fernique's inequality can be rewritten as 
\begin{equation}\label{condd}
    2(\Sigma^Y_{ij}-\Sigma^X_{ij}) \leq \Sigma^Y_{ii}-\Sigma^X_{ii}+\Sigma^Y_{jj}-\Sigma^X_{jj}, \hspace{1cm} (1 \leq i,j \leq k).
\end{equation}
In applications of Theorem~\ref{extsudfer} this can be the key inequality to look for alongside a gurantee for \(\Sigma^Y_{ij} \geq \Sigma^X_{ij}.\) The latter can usually be accomplished at the cost of at most a factor depending solely on \(m,\) which is the content of the following corollary.

\begin{corollary}
    Suppose \(X,Y \in \mathbb{R}^k\) are centered Gaussian random vectors with 
    \[\mathbb{E}[(X_i-X_j)^2] \leq \mathbb{E}[(Y_i-Y_j)^2] \hspace{0.5cm} (1 \leq i,j \leq k).\] 
    Then for all \(m \geq 1,\)
    \[\mathbb{E}[(\max_{1 \leq i \leq k}{|X_i|})^m] \leq 2^{m-1} \cdot (\sigma^m\mathbb{E}[|g|^m]+\mathbb{E}[(\max_{1 \leq i \leq k}{|Y_i|})^m])\]
    for \(g\overset{d}{=}N(0,1),\) and \(\sigma=\max_{1 \leq i \leq k}{\sqrt{\mathbb{E}[X_i^2]+\mathbb{E}[Y_i^2]}}.\)
\end{corollary}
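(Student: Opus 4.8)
The plan is to reduce the Corollary to Theorem~\ref{extsudfer} by perturbing $Y$ so that the Sudakov--Fernique-type hypothesis (the increment condition, equivalently (\ref{condd})) upgrades to the stronger symmetric condition (\ref{strcondd}) that the theorem actually requires. The natural device is to add a single common Gaussian coordinate: let $g\overset{d}{=}N(0,1)$ be independent of $Y$ and set $\tilde Y_i=Y_i+\sigma g$, where $\sigma=\max_{1\le i\le k}\sqrt{\mathbb{E}[X_i^2]+\mathbb{E}[Y_i^2]}$. Because $g$ is shared by all coordinates, one gets $\Sigma^{\tilde Y}_{ij}=\Sigma^Y_{ij}+\sigma^2$ for all $i,j$; in particular the pairwise increments are untouched, $\mathbb{E}[(\tilde Y_i-\tilde Y_j)^2]=\mathbb{E}[(Y_i-Y_j)^2]$, while every diagonal entry is inflated by $\sigma^2$.

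Next I would verify that the pair $(X,\tilde Y)$ satisfies (\ref{strcondd}). Writing $d_{ij}=\Sigma^X_{ij}-\Sigma^Y_{ij}$, the left-hand side of (\ref{strcondd}) for $(X,\tilde Y)$ is $2|d_{ij}-\sigma^2|$ and the right-hand side is $(\Sigma^Y_{ii}-\Sigma^X_{ii})+(\Sigma^Y_{jj}-\Sigma^X_{jj})+2\sigma^2$. The crux is to show $d_{ij}\le\sigma^2$, so that the absolute value opens favorably as $\sigma^2-d_{ij}$. This follows from Cauchy--Schwarz together with AM--GM: $|\Sigma^X_{ij}|\le\sqrt{\Sigma^X_{ii}\Sigma^X_{jj}}\le\tfrac12(\Sigma^X_{ii}+\Sigma^X_{jj})$ and likewise for $Y$, whence
\[
d_{ij}\le|\Sigma^X_{ij}|+|\Sigma^Y_{ij}|\le\tfrac12(\Sigma^X_{ii}+\Sigma^Y_{ii})+\tfrac12(\Sigma^X_{jj}+\Sigma^Y_{jj})\le\sigma^2
\]
by the definition of $\sigma$. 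Once the absolute value is resolved, (\ref{strcondd}) reduces, after cancelling $2\sigma^2$ from both sides, to precisely $2(\Sigma^Y_{ij}-\Sigma^X_{ij})\le(\Sigma^Y_{ii}-\Sigma^X_{ii})+(\Sigma^Y_{jj}-\Sigma^X_{jj})$, which is exactly the hypothesis (\ref{condd}). I expect this verification to carry the real content of the argument; the rest is bookkeeping.

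Having established (\ref{strcondd}) for $(X,\tilde Y)$, Theorem~\ref{extsudfer} yields $\mathbb{E}[(\max_i|X_i|)^m]\le\mathbb{E}[(\max_i|\tilde Y_i|)^m]$, and it remains to disentangle the added coordinate. By the triangle inequality $\max_i|\tilde Y_i|=\max_i|Y_i+\sigma g|\le\max_i|Y_i|+\sigma|g|$, and by convexity of $t\mapsto t^m$ on $[0,\infty)$ for $m\ge1$ one has $(a+b)^m\le2^{m-1}(a^m+b^m)$ for $a,b\ge0$. Applying this with $a=\max_i|Y_i|$ and $b=\sigma|g|$, then taking expectations and using the independence of $g$ and $Y$, gives
\[
\mathbb{E}[(\max_i|\tilde Y_i|)^m]\le 2^{m-1}\bigl(\mathbb{E}[(\max_i|Y_i|)^m]+\sigma^m\mathbb{E}[|g|^m]\bigr),
\]
which is the asserted bound. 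As throughout the paper, no measurability difficulties arise since one may restrict to the finite case.
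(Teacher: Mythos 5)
Your proposal is correct and takes essentially the same route as the paper: the identical perturbation \(\tilde Y_i=Y_i+\sigma g\), verification that \((X,\tilde Y)\) satisfies (\ref{strcondd}) by showing \(\Sigma^X_{ij}-\Sigma^{\tilde Y}_{ij}\le 0\) so that the absolute value opens favorably and the condition collapses to the increment hypothesis (\ref{condd}), followed by Theorem~\ref{extsudfer} and \((a+b)^m\le 2^{m-1}(a^m+b^m)\). The only cosmetic difference is in how the sign is established: you use Cauchy--Schwarz and AM--GM to get \(\Sigma^X_{ij}-\Sigma^Y_{ij}\le\sigma^2\), while the paper bounds \(\Sigma^{\tilde Y}_{ij}-\Sigma^X_{ij}\) below by \(\tfrac12\bigl(Var(Y_i+Y_j)+Var(X_i-X_j)\bigr)\ge 0\); these are interchangeable one-line verifications.
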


\begin{proof}
    Let 
    \[\Tilde{Y}=[(Y_i+\sigma g)_{1 \leq i \leq k}] \in \mathbb{R}^k\]
with \(g\overset{d}{=} N(0,1)\) independent of \(Y.\) Then \(X\) and \(\Tilde{Y}\) satisfy (\ref{strcondd}) due to
\[\Sigma^{\Tilde{Y}}_{ij}-\Sigma^X_{ij}=\Sigma^Y_{ij}+\sigma^2-\Sigma^X_{ij} \geq \Sigma^Y_{ij}+\frac{1}{2}(\Sigma^Y_{ii}+\Sigma^Y_{jj}+\Sigma^X_{ii}+\Sigma^X_{jj})-\Sigma^X_{ij}=\frac{Var(Y_i+Y_j)+Var(X_i-X_j)}{2} \geq 0,\]
\[\Sigma^{\Tilde{Y}}_{ii}-\Sigma^X_{ii}+\Sigma^{\Tilde{Y}}_{jj}-\Sigma^X_{jj}-2(\Sigma^{\Tilde{Y}}_{ij}-\Sigma^X_{ij})=\mathbb{E}[(Y_i-Y_j)^2]-\mathbb{E}[(X_i-X_j)^2] \geq 0.\]
Thus, the result for \(\Tilde{Y}\) and
\[\max_{1 \leq i \leq k}{|\Tilde{Y}_i|} \leq \sigma|g|+\max_{1 \leq i \leq k}{|Y_i|}\]
give
\[\mathbb{E}[(\max_{1 \leq i \leq k}{|X_i|})^m] \leq \mathbb{E}[(\max_{1 \leq i \leq k}{|\Tilde{Y}_i|})^m] \leq 2^{m-1} \cdot (\sigma^m\mathbb{E}[|g|^m]+\mathbb{E}[(\max_{1 \leq i \leq k}{|Y_i|})^m]),\]
entailing the claimed inequality.
\end{proof}

\begin{remark}
    While clearly
\[\mathbb{E}[(\max_{1 \leq i \leq k}{|Y_i|})^m] \geq (\max_{1 \leq i \leq k}{\sqrt{\mathbb{E}[Y_i^2]}})^{m} \cdot \mathbb{E}[|g|^m],\]
oftentimes \(\max_{1 \leq i \leq k}{\mathbb{E}[X_i^2]} \leq \max_{1 \leq i \leq k}{\mathbb{E}[Y_i^2]},\) in which case the above result gives
\[\mathbb{E}[(\max_{1 \leq i \leq k}{|X_i|})^m] \leq 2^{m-1} (2^{m/2}+1) \cdot \mathbb{E}[(\max_{1 \leq i \leq k}{|Y_i|})^m]\]
from \(\sigma \leq \sqrt{2} \cdot \max_{1 \leq i \leq k}{\sqrt{\mathbb{E}[Y_i^2]}}.\)
\end{remark}

\subsection{Proof of Theorem~\ref{extsudfer}}\label{sect01}

It suffices to show the claimed inequality when
%m>1, \hspace{0.3cm} 
\begin{equation}\label{str}
    Var(X_i),\hspace{0.3cm} Var(Y_i)>0,\hspace{0.3cm}|Corr(X_i,X_j)|<1,\hspace{0.3cm}|Corr(Y_i,Y_j)|<1
\end{equation}
for all \(1 \leq i,j\leq k, i \ne j.\) To see this, note that the general case %for \(m>1\) 
follows from the dominated convergence theorem by employing
\[X_\epsilon=X+\epsilon \xi,\hspace{0.5cm} Y_\epsilon=Y+\epsilon \xi,\] 
for \(\xi \in \mathbb{R}^k,\xi\overset{d}{=} N(0,I)\) independent of \(X,Y,\) and letting \(\epsilon>0\) tend to \(0:\) the three conditions continue to hold as 
\[\mathbb{E}[\xi_i]=0,\hspace{0.3cm}\mathbb{E}[(X_{i}+\epsilon \xi_i)^2]=\mathbb{E}[X_{i}^2]+\epsilon^2,\hspace{0.3cm}\mathbb{E}[(X_{i}-X_j+\epsilon \xi_i-\epsilon \xi_j)^2]=\mathbb{E}[(X_{i}-X_j)^2]+2\epsilon^2,\] 
\[|Corr(X_i,X_j)|=\frac{|Cov(X_i,X_j)|}{\sqrt{(Var(X_i)+\epsilon^2) \cdot (Var(X_j)+\epsilon^2)}} \leq \sqrt{\frac{Var(X_i) \cdot Var(X_j)}{(Var(X_i)+\epsilon^2) \cdot (Var(X_j)+\epsilon^2)}}<1,\]
with the natural analogs for the entries of \(Y\) also holding. %Lastly, the case \(m=1\) is a consequence of the dominated convergence theorem and 
%\[(\max_{1 \leq i \leq k}{|X_i|})^{1+\epsilon} \leq (\max_{1 \leq i \leq k}{|X_i|})^2+1, \hspace{0.5cm} (\max_{1 \leq i \leq k}{|Y_i|})^{1+\epsilon} \leq (\max_{1 \leq i \leq k}{|Y_i|})^2+1\] 
%for \(\epsilon \in (0,1).\)
\par
Assume now (\ref{str}) holds. For \(p \in \{2n,n \in \mathbb{N}\},\) let \(f_p:\mathbb{R}^k \to \mathbb{R}\) be given by
\[f_p(x_1,x_2,\hspace{0.05cm}...\hspace{0.05cm},x_k)=(x_1^p+x_2^p+...+x_k^p+e^{-p^2})^{m/p}.\]
It is immediate that \(f_p\) is smooth, and for \(x_1,x_2,\hspace{0.05cm}...\hspace{0.05cm},x_k \in \mathbb{R},\)
\begin{equation}\label{bdd}
    (\max_{1 \leq i \leq k}{|x_i|})^m \leq f_p(x_1,x_2,\hspace{0.05cm}...\hspace{0.05cm},x_k) \leq 2^{m/p} \cdot [k^{m/p} \cdot (\max_{1 \leq i \leq k}{|x_i|})^m+e^{-pm}]
\end{equation}
(\(x^p \geq 0, a^\alpha \leq (a+b)^{\alpha} \leq 2^{\alpha} \cdot (a^{\alpha}+b^{\alpha})\) for \(x \in \mathbb{R},a,b,\alpha>0\)), whereby
\[\lim_{p \to \infty}{f_p(x_1,x_2,\hspace{0.05cm}...\hspace{0.05cm},x_k)}=(\max_{1 \leq i \leq k}{|x_i|})^m.\]
In light of this, it suffices to justify that
\begin{equation}\label{pin}
    \limsup_{p=2n, n \to \infty}{(\mathbb{E}[f_p(X)]-\mathbb{E}[f_p(Y)])} \leq 0
\end{equation}
since the dominated convergence theorem alongside (\ref{bdd}) entail
\[\lim_{p=2n, n \to \infty}{(\mathbb{E}[f_p(X)]-\mathbb{E}[f_p(Y)])}=\mathbb{E}[(\max_{1 \leq i \leq k}{|X_i|})^m]-\mathbb{E}[(\max_{1 \leq i \leq k}{|Y_i|})^m],\]
rendering the desired claim.
\par
It remains to prove (\ref{pin}), a key towards it being Gaussian interpolation. Fix \(p \in \{2n,n \in \mathbb{N}\},p>m/2:\) the mean-value theorem and Lemma~\ref{interpollemma} applied to \(f_p\) give
\begin{equation}\label{meant}
    \mathbb{E}[f_p(X)]-\mathbb{E}[f_p(Y)]=\frac{1}{2}\sum_{1 \leq i,j \leq d}{(\Sigma^X_{ij}-\Sigma^Y_{ij})\mathbb{E}[\frac{\partial^2f_p}{\partial{x_i} \partial{x_j}}(Z(u_p))]},
\end{equation}
for some \(u_p \in (0,1),\) where
\[Z(u)=\sqrt{u} \cdot X+\sqrt{1-u} \cdot Y.\]
It is shown next that
\[Z_{p,u}:=\sum_{1 \leq i,j \leq d}{(\Sigma^X_{ij}-\Sigma^Y_{ij})\frac{\partial^2f_p}{\partial{x_i} \partial{x_j}}(Z(u))}=Z_{p,u,-}+Z_{p,u,0}\]
with 
\begin{equation}\label{decomp}
    Z_{p,u,-} \leq 0, \hspace{0.5cm} \mathbb{E}[Z_{p,u,0}] \leq \frac{C}{p}
\end{equation}
for all \(u \in [0,1],\) and some \(C=\overline{C}(m,X,Y)>0,\) decomposition that yields (\ref{pin}) by virtue of (\ref{meant}).
\par
Begin by computing the partial derivatives of 
\[f_p(x_1,x_2,\hspace{0.05cm}...\hspace{0.05cm},x_k)=(x_1^p+x_2^p+...+x_k^p+e^{-p^2})^{m/p}:\]
for ease of notation, let
\[A=x_1^p+x_2^p+...+x_k^p+e^{-p^2}.\]
For \(1 \leq i,j \leq k,\)
\[\frac{\partial{f}_p}{\partial{x_{i}}}=A^{m/p-1} \cdot mx_i^{p-1},\]
\[\frac{\partial^2{f}_p}{\partial{x_{i}}\partial{x_j}}=A^{m/p-2} \cdot mx_i^{p-1} \cdot (m-p)x_j^{p-1}+A^{m/p-1} \cdot m(p-1)x_i^{p-2}\chi_{i=j},\]
whereby 
\[Z_{p,u}=m(p-1)A^{m/p-1}\sum_{1 \leq i \leq k}{(\Sigma^X_{ii}-\Sigma^Y_{ii})x_i^{p-2}}+m(m-p)A^{m/p-2}\sum_{1 \leq i,j \leq k}{(\Sigma^X_{ij}-\Sigma^Y_{ij})x_i^{p-1}x_j^{p-1}}.
\]
Writing \(m(m-p)=m(m-1)-m(p-1)\) leads to
\[Z_{p,u}=m(m-1)A^{m/p-2}\sum_{1 \leq i,j \leq k,i \ne j}{(\Sigma^X_{ij}-\Sigma^Y_{ij})x_i^{p-1}x_j^{p-1}}+m(m-1)A^{m/p-2}\sum_{1 \leq i \leq k}{(\Sigma^X_{ii}-\Sigma^Y_{ii})x_i^{2p-2}}+\]
\[+m(p-1)A^{m/p-2}[A \sum_{1 \leq i \leq k}{(\Sigma^X_{ii}-\Sigma^Y_{ii})x_i^{p-2}}-\sum_{1 \leq i,j \leq k}{(\Sigma^X_{ij}-\Sigma^Y_{ij})x_i^{p-1}x_j^{p-1}}]:=Z_{p,u,1}+Z_{p,u,2}+Z_{p,u,3}.\]
The claimed properties in (\ref{decomp}) ensue from the analysis below which deals separately with each of these new random variables. Concretely,
\[Z_{p,u,0}=Z_{p,u,1}+(Z_{p,u,3}-Z_{p,u,3,-}), \hspace{0.5cm} Z_{p,u,-}=Z_{p,u,2}+Z_{p,u,3,-}\]
for
\[Z_{p,u,3,-}=-m(p-1)A^{m/p-2} e^{-p^2}\sum_{1 \leq i \leq k}{\Delta_{ii}x_i^{p-2}},\]
where for ease of notation,
\[\Delta_{ij}=\Sigma^Y_{ij}-\Sigma^X_{ij} \hspace{0.5cm} (1 \leq i,j \leq k),\] 
\[M=\sum_{1 \leq i,j \leq k}{|\Delta_{i,j}|}.\]

\subsection*{Part I}

Consider \(Z_{p,u,1}:\) Lemma~\ref{lemmadelta} and (\ref{str}) entail that for \(i \ne j,\) \(0<\min{(|x_i|,|x_j|)}<\max{(|x_i|,|x_j|)}\) with probability \(1,\) whereby 
\[\mathbb{E}[Z_{p,u,1}] \leq Mm(m-1)\sum_{1 \leq i,j \leq k, i \neq j}{\mathbb{E}[(\max{(|x_i|,|x_j|)})^{m-2} \cdot (\frac{\min{(|x_i|,|x_j|)}}{\max{(|x_i|,|x_j|)}})^{p-1}]}\]
since the contribution of \((i,j),i \ne j\) is at most
\[m(m-1)A^{m/p-2}|(\Sigma^X_{ij}-\Sigma^Y_{ij})x_i^{p-1}x_j^{p-1}| \leq m(m-1) \cdot (\max{(|x_i|,|x_j|)})^{m-2p} \cdot M \cdot (\max{(|x_i|,|x_j|)})^{p-1} \cdot (\min{(|x_i|,|x_j|)})^{p-1}=\]
\[=Mm(m-1) \cdot (\max{(|x_i|,|x_j|)})^{m-2} \cdot (\frac{\min{(|x_i|,|x_j|)}}{\max{(|x_i|,|x_j|)}})^{p-1}.\]
Consequently, Lemma~\ref{lemmaint2} and Lemma~\ref{lemmadelta} give that
\[\mathbb{E}[Z_{p,u,1}] \leq Mm(m-1) \cdot k^2 \cdot \frac{C_1(m,X,Y)}{p}.\]

\subsection*{Part II}

Consider now \(Z_{p,u,2}:\) the initial condition (\ref{strcondd}) can be rewritten as 
\begin{equation}\label{deltacond}
    2|\Delta_{ij}| \leq \Delta_{ii}+\Delta_{jj}.
\end{equation}
Choosing \(i=j\) entails \(\Delta_{ii} \geq 0,\) which together with \(m \geq 1\) renders
\[Z_{p,u,2}=m(m-1)A^{m/p-2}\sum_{1 \leq i \leq k}{(\Sigma^X_{ii}-\Sigma^Y_{ii})x_i^{2p-2}}=-m(m-1)A^{m/p-2}\sum_{1 \leq i \leq k}{\Delta_{ii}x_i^{2p-2}} \leq 0.\]

\subsection*{Part III}

Consider lastly \(Z_{p,u,3}:\) its last factor is
\[-A \sum_{1 \leq i \leq k}{\Delta_{ii}x_i^{p-2}}+\sum_{1 \leq i,j \leq k}{\Delta_{ij}x_i^{p-1}x_j^{p-1}}=\]
\[=-e^{-p^2}\sum_{1 \leq i \leq k}{\Delta_{ii}x_i^{p-2}}-\sum_{1 \leq i,j \leq k}{\Delta_{ii}x_i^{p-2}x_j^p}+\sum_{1 \leq i,j \leq k}{\Delta_{ij}x_i^{p-1}x_j^{p-1}},\]
and 
\[-\sum_{1 \leq i,j \leq k}{\Delta_{ii}x_i^{p-2}x_j^p}+\sum_{1 \leq i,j \leq k}{\Delta_{ij}x_i^{p-1}x_j^{p-1}} \leq\]
\[\leq -\sum_{1 \leq i,j \leq k}{\Delta_{ii}x_i^{p-2}x_j^p}+\sum_{1 \leq i,j \leq k}{|\Delta_{ij}| \cdot |x_i|^{p-1} \cdot |x_j|^{p-1}} \leq\]
\[\leq -\sum_{1 \leq i<j \leq k}{[\Delta_{ii}x_i^{p-2}x_j^p+\Delta_{jj}x_j^{p-2}x_i^p-(\Delta_{ii}+\Delta_{ij}) \cdot |x_i|^{p-1} \cdot |x_j|^{p-1}]}\]
from \(2|p,\) \(\Delta_{ii} \geq 0,\) \(2|\Delta_{ij}| \leq \Delta_{ii}+\Delta_{jj}\) (recall (\ref{deltacond})). This gives that
\[\mathbb{E}[Z_{p,u,3}-Z_{p,u,3,-}] \leq m(p-1)\sum_{1 \leq i<j \leq k}{\mathbb{E}[A^{m/p-2} \cdot (-\Delta_{ii}x_i^{p-2}x_j^p-\Delta_{jj}x_j^{p-2}x_i^p+(\Delta_{ii}+\Delta_{ij}) \cdot |x_i|^{p-1} \cdot |x_j|^{p-1})]}.\]
Since
\[-\Delta_{ii}x_i^{p-2}x_j^p-\Delta_{jj}x_j^{p-2}x_i^p+(\Delta_{ii}+\Delta_{ij}) \cdot |x_i|^{p-1} \cdot |x_j|^{p-1}=\]
\[=-\Delta_{ii}|x_i|^{p-2} \cdot|x_j|^p-\Delta_{jj}|x_j|^{p-2} \cdot|x_i|^p+(\Delta_{ii}+\Delta_{ij}) \cdot |x_i|^{p-1} \cdot |x_j|^{p-1}=\]
\[=|x_i|^{p-2} \cdot |x_j|^{p-2} \cdot (|x_i|-|x_j|) \cdot (\Delta_{ii}|x_j|-\Delta_{jj}|x_i|),\]
it follows that
\[\mathbb{E}[Z_{p,u,3}-Z_{p,u,3,-}] \leq mM(p-1)\sum_{1 \leq i,j \leq k,i \ne j}{\mathbb{E}[(\max{(|x_i|,|x_j|)})^{m-1} \cdot (\frac{\min{(|x_i|,|x_j|)}}{\max{(|x_i|,|x_j|)}})^{p-2} \cdot (1-\frac{\min{(|x_i|,|x_j|)}}{\max{(|x_i|,|x_j|)}})]}\]
inasmuch as when \(i \ne j,\) \(0<\min{(|x_i|,|x_j|)}<\max{(|x_i|,|x_j|)}\) with probability \(1,\) whereby 
\[A^{m/p-2} \cdot |-\Delta_{ii}x_i^{p-2}x_j^p-\Delta_{jj}x_j^{p-2}x_i^p+(\Delta_{ii}+\Delta_{ij}) \cdot |x_i|^{p-1} \cdot |x_j|^{p-1}| \leq\]
\[\leq (\max{(|x_i|,|x_j|)})^{m-2p} \cdot (\max{(|x_i|,|x_j|)})^{p-2} \cdot (\min{(|x_i|,|x_j|)})^{p-2} \cdot (\max{(|x_i|,|x_j|)}-\min{(|x_i|,|x_j|)}) \cdot M\max{(|x_i|,|x_j|)}=\]
\[=M \cdot (\max{(|x_i|,|x_j|)})^{m-2} \cdot (\frac{\min{(|x_i|,|x_j|)}}{\max{(|x_i|,|x_j|)}})^{p-2} \cdot (1-\frac{\min{(|x_i|,|x_j|)}}{\max{(|x_i|,|x_j|)}}).\]
Lastly, Lemma~\ref{lemmaint2} in conjunction with Lemma~\ref{lemmadelta} provide
\[\mathbb{E}[Z_{p,u,3}-Z_{p,u,3,-}] \leq mM(p-1) \cdot k^2 \cdot \frac{C_1(m,X,Y)}{p(p-1)}.\]

\subsection{Auxiliary Lemmas}\label{sect02}

This subsection contains a simple result (Lemma~\ref{lemmadelta}) on the covariance structure of the interpolated random variable
\[Z(u)=\sqrt{u} \cdot X+\sqrt{1-u} \cdot Y \hspace{0.5cm} (u \in [0,1]),\]
as well as expectation inequalities regarding multivariate normal random variables (Lemma~\ref{lemmaint2}). These results are vital in the proof of Theorem~\ref{extsudfer}.

\begin{lemma}\label{lemmadelta}
    Let \(X,Y \in \mathbb{R}^2\) be independent centered Gaussian random vectors with nonzero entries, and take 
    \[W_1(t)=\sqrt{t}X_1+\sqrt{1-t}Y_1, \hspace{0.5cm} W_2(t)=\sqrt{t}X_2+\sqrt{1-t}Y_2 \hspace{0.5cm} (t \in [0,1]).\]
    Then for all \(t \in [0,1],\)
    \[v_{\min} \leq Var(W_1(t)),Var(W_2(t) \leq v_{\max}, \hspace{0.5cm} |Corr(W_1(t),W_2(t))| \leq \max{(|Corr(X_1,X_2)|,|Corr(Y_1,Y_2)|)},\]
    where
    \[v_{\min}=\min{(Var(X_1),Var(X_2),Var(Y_1),Var(Y_2))},\]  
    \[v_{\max}=\max{(Var(X_1),Var(X_2),Var(Y_1),Var(Y_2))}.\] 
\end{lemma}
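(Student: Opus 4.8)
The plan is to use the independence of $X$ and $Y$, which collapses every second moment of $W_1(t),W_2(t)$ into a convex combination of the corresponding moments of the two input vectors. First I would note that, for $i \in \{1,2\}$,
$$Var(W_i(t)) = t\,Var(X_i) + (1-t)\,Var(Y_i),$$
the cross term disappearing by independence. As a convex combination, $Var(W_i(t))$ lies between $\min(Var(X_i),Var(Y_i))$ and $\max(Var(X_i),Var(Y_i))$, and these are in turn trapped between $v_{\min}$ and $v_{\max}$, settling the variance bounds immediately (and showing the denominators below are positive, so the correlation is well defined thanks to the nonzero-variance hypothesis).

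For the correlation, the same independence gives
$$Cov(W_1(t),W_2(t)) = t\,Cov(X_1,X_2) + (1-t)\,Cov(Y_1,Y_2).$$
Writing $Cov(X_1,X_2) = \rho_X\sqrt{Var(X_1)Var(X_2)}$ and analogously for $Y$, and setting $\rho := \max(|\rho_X|,|\rho_Y|)$, the triangle inequality bounds the numerator of $|Corr(W_1(t),W_2(t))|$ by $\rho\bigl(t\sqrt{Var(X_1)Var(X_2)} + (1-t)\sqrt{Var(Y_1)Var(Y_2)}\bigr)$. It then suffices to show this is at most $\rho$ times the denominator $\sqrt{Var(W_1(t))\,Var(W_2(t))}$, i.e. to prove the purely algebraic inequality
$$t\sqrt{ab} + (1-t)\sqrt{cd} \leq \sqrt{\bigl(ta+(1-t)c\bigr)\bigl(tb+(1-t)d\bigr)},$$
where $a = Var(X_1),\ b = Var(X_2),\ c = Var(Y_1),\ d = Var(Y_2)$.

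This last inequality is where the only real content lies, and I would dispatch it by Cauchy--Schwarz applied to the vectors $\bigl(\sqrt{ta},\sqrt{(1-t)c}\bigr)$ and $\bigl(\sqrt{tb},\sqrt{(1-t)d}\bigr)$: their inner product is exactly $t\sqrt{ab}+(1-t)\sqrt{cd}$, while their norms are the two square-rooted factors on the right-hand side. Equivalently, squaring both sides reduces the gap to $t(1-t)\bigl(\sqrt{ad}-\sqrt{bc}\bigr)^2 \geq 0$. Either route is a one-line computation; the only bookkeeping is to treat $\rho = 0$ separately (where the correlation bound is trivial). I do not expect a genuine obstacle here: once the numerator is estimated term by term, the correlation bound collapses to this single Cauchy--Schwarz step.
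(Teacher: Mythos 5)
Your proof is correct and follows essentially the same route as the paper: the variance bound via the convex combination $t\,Var(X_i)+(1-t)\,Var(Y_i)$, and the correlation bound reduced to the algebraic inequality $t\sqrt{ab}+(1-t)\sqrt{cd} \leq \sqrt{(ta+(1-t)c)(tb+(1-t)d)}$, which the paper verifies by the very gap identity $t(1-t)(\sqrt{ad}-\sqrt{bc})^2 \geq 0$ that you mention as the alternative to Cauchy--Schwarz. The two arguments are the same up to this cosmetic choice.
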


\begin{proof}
    Take
    \[a=Var(X_1), \hspace{0.1cm} b=Var(Y_1),\hspace{0.1cm} c=Var(X_2),\hspace{0.1cm} d=Var(Y_2),\hspace{0.1cm}\rho_x=Cov(X_1,X_2),\hspace{0.1cm}\rho_y=Cov(Y_1,Y_2),\]
    and let \(t \in [0,1]\) be arbitrary. For \(1 \leq j \leq 2,\)
    \begin{equation}\label{inneqq}
        Var(W_{j}(t))=tVar(X_{j})+(1-t)Var(Y_{j}) \in [v_{\min},v_{\max}],
    \end{equation}
    while
    \[(Corr(W_1(t),W_2(t)))^2=\frac{(t\rho_{x} \sqrt{ac}+(1-t)\rho_{y}\sqrt{bd})^2}{(ta+(1-t)b) \cdot (tc+(1-t)d)} \leq\]
    \[\leq \max{(\rho^2_{x},\rho^2_{y})} \cdot \frac{(t\sqrt{ac}+(1-t)\sqrt{bd})^2}{(ta+(1-t)b) \cdot (tc+(1-t)d)} \leq \max{(\rho^2_{x},\rho^2_{y})}\]
    since
    \[1-\frac{(t\sqrt{ac}+(1-t)\sqrt{bd})^2}{(ta+(1-t)b) \cdot (tc+(1-t)d)}=\frac{t(1-t) \cdot (\sqrt{ad}-\sqrt{bc})^2}{(ta+(1-t)b) \cdot (tc+(1-t)d)} \geq 0.\]
\end{proof}

\begin{lemma}\label{lemmaint2}
    %Suppose \(X_1,Y_1 \in \mathbb{R}\) have a joint multivariate normal distribution with \(X_1 \ne 0, Y_1 \ne 0,|Corr(X_1,Y_1)|<1.\)
    Suppose \(g \in \mathbb{R}^n\) is a Gaussian random vector with \(g\overset{d}{=}N(0,I),\) and let \(X_1,Y_1 \in \mathbb{R}\) be given by \(X_1=M_1g,Y_1=M_2g\) for deterministic \(M_1,M_2 \in \mathbb{R}^{1 \times n}-\{0\}\) such that \(|Corr(X_1,Y_1)|<1.\) Then for any \(m>0,\) 
    there exists \(C(m)>0\) such that for all \(p>1,\)
    \begin{equation}\label{bdint}
    \mathbb{E}[(\max{(|X_1|,|Y_1|)})^{m-2} \cdot \frac{(\min{(|X_1|,|Y_1|)})^{p-1}}{(\max{(|X_1|,|Y_1|)})^{p-1}}] \leq \frac{C(m,X_1,Y_1)}{p},    
    \end{equation}
    \begin{equation}\label{bdint2}
    \mathbb{E}[(\max{(|X_1|,|Y_1|)})^{m-2} \cdot \frac{(\min{(|X_1|,|Y_1|)})^{p-2}}{(\max{(|X_1|,|Y_1|)})^{p-2}} \cdot (1-\frac{\min{(|X_1|,|Y_1|)}}{\max{(|X_1|,|Y_1|)}})] \leq \frac{C(m,X_1,Y_1)}{p(p-1)},    
    \end{equation}    
    where 
    \begin{equation}\label{cdef1}
     C(m,X_1,Y_1)=C(m) \cdot \frac{(\max{(Var(X_1),Var(Y_1))})^{m/2}}{\sqrt{Var(X_1) \cdot Var(Y_1) \cdot (1-(Corr(X_1,Y_1)^2))}}. 
    \end{equation}
\end{lemma}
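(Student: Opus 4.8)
The plan is to reduce both bounds to one-dimensional integrals through the change of variables given by the ratio $t=Y_1/X_1$; this substitution is the device that isolates the $1/p$ and $1/(p(p-1))$ decay and simultaneously makes the dependence on the covariance structure explicit. First I would record the joint law. Since $M_1,M_2 \ne 0$ and $|Corr(X_1,Y_1)|<1$, the pair $(X_1,Y_1)$ is a nondegenerate centered bivariate Gaussian; writing $\sigma_1=\sqrt{Var(X_1)}$, $\sigma_2=\sqrt{Var(Y_1)}$, $\rho=Corr(X_1,Y_1)$, its density is
\[
f(x,y)=\frac{1}{2\pi\sigma_1\sigma_2\sqrt{1-\rho^2}}\exp\left(-\frac{1}{2(1-\rho^2)}\left(\frac{x^2}{\sigma_1^2}-\frac{2\rho xy}{\sigma_1\sigma_2}+\frac{y^2}{\sigma_2^2}\right)\right).
\]
Both integrands are invariant under interchanging the labels once $\max$ and $\min$ are taken, so I would split each expectation over $\{|X_1|\ge|Y_1|\}$ and $\{|X_1|<|Y_1|\}$ and treat only the first, the second following by swapping $\sigma_1\leftrightarrow\sigma_2$; this leaves the constant in (\ref{cdef1}) unchanged, as it depends on $\sigma_1,\sigma_2$ only through $\sigma_1\sigma_2$ and $\max(\sigma_1,\sigma_2)$.

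On $\{|X_1|\ge|Y_1|\}$ I would substitute $t=y/x\in[-1,1]$, i.e.\ $y=tx$, with Jacobian $dx\,dy=|x|\,dx\,dt$. The key structural point is that here $\max(|X_1|,|Y_1|)=|x|$ and $\min/\max=|t|$, so the $p$-dependent powers of $|x|$ cancel against those from $\max^{m-2}$ and from the Jacobian: the first integrand collapses to $|t|^{p-1}f(x,tx)|x|^{m-1}$ and the second to $|t|^{p-2}(1-|t|)f(x,tx)|x|^{m-1}$. Each expectation thus factorizes as $\int_{-1}^{1}w_p(t)\,I(t)\,dt$, where $I(t)=\int_{\mathbb{R}}f(x,tx)|x|^{m-1}\,dx$ and $w_p(t)=|t|^{p-1}$ (resp.\ $|t|^{p-2}(1-|t|)$).

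The $t$-integrals are elementary and supply exactly the advertised rates: $\int_{-1}^{1}|t|^{p-1}\,dt=2/p$ and, for $p>1$, $\int_{-1}^{1}|t|^{p-2}(1-|t|)\,dt=2/(p(p-1))$. It remains to bound $I(t)$ uniformly in $t\in[-1,1]$. Since $f(x,tx)=\tfrac{1}{2\pi\sigma_1\sigma_2\sqrt{1-\rho^2}}\exp(-\tfrac{x^2}{2(1-\rho^2)}Q(t))$ with $Q(t)=\sigma_1^{-2}-2\rho t(\sigma_1\sigma_2)^{-1}+t^2\sigma_2^{-2}$, the Gaussian moment identity $\int_{\mathbb{R}}|x|^{m-1}e^{-\beta x^2/2}\,dx=2^{m/2}\Gamma(m/2)\beta^{-m/2}$ gives
\[
I(t)=\frac{2^{m/2}\Gamma(m/2)}{2\pi\sigma_1\sigma_2\sqrt{1-\rho^2}}\,(1-\rho^2)^{m/2}\,Q(t)^{-m/2}.
\]
Completing the square, $Q(t)=\sigma_2^{-2}(t-\rho\sigma_2/\sigma_1)^2+(1-\rho^2)\sigma_1^{-2}\ge(1-\rho^2)/\sigma_1^2$, so $Q(t)^{-m/2}\le\sigma_1^{m}(1-\rho^2)^{-m/2}$ and hence $I(t)\le C(m)\,(\max(\sigma_1,\sigma_2))^m/(\sigma_1\sigma_2\sqrt{1-\rho^2})$ uniformly in $t$, using $\sigma_1^m\le(\max(\sigma_1,\sigma_2))^m$; this is precisely $C(m,X_1,Y_1)$ of (\ref{cdef1}). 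Multiplying this uniform bound by the two $t$-integrals, adding the symmetric contribution of $\{|X_1|<|Y_1|\}$, and absorbing numerical factors into $C(m)$ yields (\ref{bdint}) and (\ref{bdint2}).

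I expect the main obstacle to be the uniform-in-$t$ control of $I(t)$ with the exact constant. The cancellation of the $|x|$-powers under $t=y/x$ is the structural observation that makes the $p$-dependence transparent, but extracting the stated dependence on $\sigma_1,\sigma_2,\rho$—in particular pulling out $(\max(\sigma_1,\sigma_2))^m$—relies on the completion of the square and the elementary lower bound $Q(t)\ge(1-\rho^2)/\sigma_1^2$. Note that the hypothesis $|\rho|<1$ is exactly what keeps $Q(t)$ bounded below away from $0$ and so renders $I(t)$ finite, while $m>0$ guarantees convergence of the radial integral near the origin.
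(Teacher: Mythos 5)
Your proof is correct, and its engine is the same as the paper's: split according to which coordinate realizes the maximum, substitute the ratio \(t=\min/\max\), and use \(\int_0^1 t^{p-1}\,dt=1/p\) and \(\int_0^1 t^{p-2}(1-t)\,dt=1/(p(p-1))\) to produce the decay in \(p\), leaving an \((m-1)\)-st Gaussian moment to supply the constant. Where you genuinely differ is the treatment of the correlation. The paper first replaces \((X_1,Y_1)\) by \((X,cX+Y)\) with \(X,Y\) independent and \(c=Cov(X_1,Y_1)/Var(X_1)\), and then handles the two regions asymmetrically with crude exponent bounds: on \(S_x\) it discards the factor \(e^{-c_2y^2}\le 1\) outright, while on \(S_y\) it needs the inequality \(c_1x^2+c_2y^2\ge c_3(cx+y)^2\) to manufacture a Gaussian weight in the max variable, which is where the constant \(c_3\) and the case split \(c=0\) versus \(c\ne 0\) enter. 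You instead keep the raw bivariate density, integrate out the max variable exactly to get \(I(t)\propto Q(t)^{-m/2}\), and bound \(Q(t)\ge(1-\rho^2)/\sigma_1^2\) uniformly on \([-1,1]\) by completing the square; the second region is then disposed of by the literal swap \(\sigma_1\leftrightarrow\sigma_2\) rather than by a separate argument. Your route is more symmetric and tracks the constant in (\ref{cdef1}) more transparently (the \((\max(\sigma_1,\sigma_2))^m\) and the \(\sqrt{1-\rho^2}\) appear exactly where they must), at the cost of computing \(I(t)\) exactly; the paper's route avoids any exact integration but pays with the \(c_3\) bookkeeping. Both arguments yield the lemma with the stated constant, for all real \(p>1\).
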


\begin{proof}
    Suppose without loss of generality that \(Var(X_1) \geq Var(Y_1):\) then 
     \[(X_1,Y_1)\overset{d}{=}(X,cX+Y)\]
    for some \(c \in (-1,1),\) and \(X,Y\) independent centered normal random variables. To see this, let 
    \[c=\frac{Cov(X_1,Y_1)}{Var(X_1)}, \hspace{0.5cm} X=X_1,\hspace{0.5cm} Y=Y_1-cX_1;\]
    then 
    \[|c|=|Corr(X_1,Y_1)| \cdot \sqrt{\frac{Var(Y_1)}{Var(X_1)}} \leq |Corr(X_1,Y_1)|<1,\] 
    \(X,Y\) are centered as well as independent because their joint distribution is multivariate Gaussian and
    \[Cov(X,Y)=Cov(X_1,Y_1)-cVar(X_1)=0.\]
    Since 
    \[Var(X)=Var(X_1),\]
    \[Var(Y)=Var(Y_1)-c^2Var(X)=Var(Y_1)\cdot (1-(Corr(X_1,Y_1))^2),\]
    it suffices to show
    \[\int_{\mathbb{R}^2}{(\max{(|x|,|cx+y|)})^{m-2} \cdot \frac{(\min{(|x|,|cx+y|)})^{p-1}}{(\max{(|x|,|cx+y|)})^{p-1}}\cdot e^{-c_1x^2-c_2y^2}dxdy} \leq \frac{\overline{C}_1(m)}{p} \cdot [c_1^{-m/2}+c_3^{-m/2}],\]
     \[\int_{\mathbb{R}^2}{(\max{(|x|,|cx+y|)})^{m-2} \cdot \frac{(\min{(|x|,|cx+y|)})^{p-2}}{(\max{(|x|,|cx+y|)})^{p-2}} \cdot (1-\frac{\min{(|x|,|cx+y|)}}{\max{(|x|,|cx+y|)}})\cdot e^{-c_1x^2-c_2y^2}dxdy} \leq \frac{\overline{C}_2(m)}{p(p-1)} \cdot [c_1^{-m/2}+c_3^{-m/2}],\]
    where 
    \[c_3=\begin{cases}
        \frac{c_2}{4}, \hspace{2.4cm} c=0,\\
        \frac{1}{4} \cdot \min{(\frac{c_1}{c^2},c_2)}, \hspace{0.5cm} c \ne 0,
    \end{cases}\]
    %\(c_3=\frac{1}{4} \cdot \min{(\frac{c_1}{c^2},c_2)}>0\) (by convention, take \(c_3=\frac{c_2}{4}\) when \(c=0\)) 
    because in the current case,
    \[c=\frac{Cov(X_1,Y_1)}{Var(X_1)}, \hspace{0.5cm} c_1=\frac{1}{2Var(X)}=\frac{1}{2Var(X_1)}, \]
    \[c_2=\frac{1}{2Var(Y)}=\frac{1}{2(Var(Y_1)-c^2Var(X_1))}=\frac{1}{2Var(Y_1)(1-(Corr(X_1,Y_1))^2)},\]
    whereby
    \[c_3 \leq \frac{1}{8Var(Y_1) \cdot \max{((Corr(X_1,Y_1))^2,1-(Corr(X_1,Y_1))^2)}} \leq \frac{1}{4Var(Y_1)}.\]
    \par
    \((a)\) Begin with the first integral of interest. Symmetry yields 
    \[\int_{\mathbb{R}^2}{(\max{(|x|,|cx+y|)})^{m-2} \cdot \frac{(\min{(|x|,|cx+y|)})^{p-1}}{(\max{(|x|,|cx+y|)})^{p-1}}\cdot e^{-c_1x^2-c_2y^2}dxdy}=\]
    \[=2\int_{x \geq 0, y \in \mathbb{R}}{(\max{(x,|cx+y|)})^{m-2} \cdot \frac{(\min{(x,|cx+y|)})^{p-1}}{(\max{(x,|cx+y|)})^{p-1}}\cdot e^{-c_1x^2-c_2y^2}dxdy}.\]
    By partitioning the integration domain into two sets, \(S_{x}\) and \(S_{y},\) the claimed result becomes a consequence of (\ref{g1b}) and (\ref{g2b}) below.
    \par
    \(I.\) \(S_{x}=\{(x,y) \in \mathbb{R}^2: x \geq |cx+y|\}:\) 
    \[\int_{x \geq |cx+y|,x>0}{x^{m-2} \cdot \frac{|cx+y|^{p-1}}{x^{p-1}} \cdot e^{-c_1x^2-c_2y^2}dxdy} \leq \int_{x \geq |cx+y|,x>0}{x^{m-2} \cdot \frac{|cx+y|^{p-1}}{x^{p-1}} \cdot e^{-c_1x^2}dxdy}=\]
    \begin{equation}\label{g1b}\tag{\(pSx\)}
        =2\int_{0}^{\infty}{x^{m-1}e^{-c_1x^2} \cdot (\int_{0}^{1}{t^{p-1}dt})dx}=\frac{2}{p} \int_{0}^{\infty}{x^{m-1}e^{-c_1x^2}dx}=\frac{2c(m)}{pc_1^{m/2}}
    \end{equation}
    using the change of variables \(y=tx-cx\) with \(dy=xdt,\) and
    %two changes of variables \((x,y) \to (x,y-cx), y=tx\) \((|t| \leq 1),\) and
    \[\int_{0}^{\infty}{x^{M}e^{-x^2}dx} \leq \int_{0}^{1}{x^{M}dx}+C(M)\int_{1}^{\infty}{e^{-x^2/2}dx} \leq \frac{1}{M+1}+ C(M) \cdot \frac{\sqrt{2\pi}}{2} \hspace{0.5cm} (M>-1).\]

    \par
    \(II.\) \(S_{y}=\{(x,y) \in \mathbb{R}^2: 0 \leq x<|cx+y|\}:\)
    \[\int_{|cx+y|>x \geq 0}{|cx+y|^{m-2} \cdot \frac{|x|^{p-1}}{|cx+y|^{p-1}} \cdot e^{-c_1x^2-c_2y^2}dxdy} \leq \int_{|cx+y|>x \geq 0}{|cx+y|^{m-2} \cdot \frac{|x|^{p-1}}{|cx+y|^{p-1}} \cdot e^{-c_3(cx+y)^2}dxdy}\]
    because \(c_1x^2+c_2y^2 \geq c_3(cx+y)^2,\) which is clear when \(c=0,\) and else
    \[c_1x^2+c_2y^2=\frac{c_1}{c^2}|cx|^2+c_2|y|^2 \geq \min{(\frac{c_1}{c^2},c_2)} \cdot \frac{(|cx|+|y|)^2}{4} \geq c_3(cx+y)^2.\]
    The change of variables \((x,y) \to (x,cx+y)\) gives the last integral equals 
    \begin{equation}\label{g2b}\tag{\(pSy\)}
       2\int_{z>x \geq 0}{z^{m-2} \cdot \frac{x^{p-1}}{z^{p-1}} \cdot e^{-c_3z^2}dxdz}=\frac{2}{p}\int_{0}^{\infty}{z^{m-1}e^{-c_3z^2}dz}=\frac{2c(m)}{pc_3^{m/2}}.
    \end{equation}

    \((b)\) Consider now the second integral of interest: 
    \[\int_{\mathbb{R}^2}{(\max{(|x|,|cx+y|)})^{m-2} \cdot \frac{(\min{(|x|,|cx+y|)})^{p-2}}{(\max{(|x|,|cx+y|)})^{p-2}} \cdot (1-\frac{\min{(|x|,|cx+y|)}}{\max{(|x|,|cx+y|)}})\cdot e^{-c_1x^2-c_2y^2}dxdy}.\]
    Reasoning as in part \((a)\) above yields a bound as claimed above: the primary difference is that \(\int_{0}^{1}{t^{p-1}dt}\) is replaced by 
    \[\int_{0}^{1}{t^{p-2}(1-t)dt}=\frac{1}{p-1}-\frac{1}{p}=\frac{1}{p(p-1)}.\]% \leq \frac{2}{p^2}.\]  
\end{proof}

\bibliography{references}

\end{document}